\documentclass[a4paper,11pt,reqno]{amsart}

\tolerance=1000
\hbadness=8000
\hfuzz=15.00pt
\vbadness=10000
\vfuzz=5.00pt

\usepackage{amsfonts,latexsym,rawfonts,amsmath,amssymb,amsthm, mathrsfs, lscape}
\usepackage[all]{xy}
\usepackage[english]{babel}
\usepackage[utf8]{inputenc}
\usepackage{xfrac}

\usepackage{array, tabularx}

\usepackage{bm}
\bmdefine{\bX}{X}
\bmdefine{\ba}{$\alpha$}

\usepackage{setspace}
\setstretch{1.01}

\usepackage{textcomp}

\usepackage{xparse}
\usepackage{soul}

\newcommand{\referenza}{}

\newtheorem{thm}{Theorem}[section]
\newtheorem*{thm*}{Theorem \referenza}
\newtheorem{cor}[thm]{Corollary}
\newtheorem*{cor*}{Corollary \referenza}

\newtheorem*{lem*}{Lemma \referenza}

\newtheorem*{clm*}{Claim \referenza}
\newtheorem{prop}[thm]{Proposition}
\newtheorem*{prop*}{Proposition \referenza}

\newtheorem*{conj*}{Conjecture \referenza}

\newtheorem{rmk}[thm]{Remark}

\theoremstyle{plain}

\numberwithin{equation}{section}

\def \O {\mathcal O}

\def \Q {\mathbb Q}
\def \R {\mathbb R}
\def \C {\mathbb C}
\def \Z {\mathbb Z}

\newcommand{\sspace}{\_}

\renewcommand{\Re}{\mathsf{Re}\,}
\renewcommand{\Im}{\mathsf{Im}\,}

\allowdisplaybreaks

\title[Rigidity of OT manifolds]{Rigidity of Oeljeklaus-Toma manifolds}

\author{Daniele Angella}
\address[Daniele Angella]{
Dipartimento di Matematica e Informatica ``Ulisse Dini''\\
Universit\`a degli Studi di Firenze\\
viale Morgagni 67/a\\
50134 Firenze, Italy
}
\email{daniele.angella@gmail.com}
\email{daniele.angella@unifi.it}

\author{Maurizio Parton}
\address[Maurizio Parton]{
Dipartimento di Economia\\
Universit\`a di Chieti-Pescara\\
viale della Pineta 4\\
65129 Pescara, Italy
} 
\email{parton@unich.it}

\author{Victor Vuletescu}
\address[Victor Vuletescu]{
Faculty of Mathematics and Informatics\\
University of Bucharest\\
Academiei st. 14\\
Bucharest, Romania
}
\email{vuli@fmi.unibuc.ro}

\keywords{Oeljeklaus-Toma manifold, flat line bundle, deformation, rigidity}
\thanks{During the preparation of this work, the first-named author has been supported by the Project PRIN ``Varietà reali e complesse: geometria, topologia e analisi armonica'', by the Project FIRB ``Geometria Differenziale e Teoria Geometrica delle Funzioni'', by SNS GR14 grant ``Geometry of non-Kähler manifolds'', by project SIR 2014 AnHyC ``Analytic aspects in complex and hypercomplex geometry" (code RBSI14DYEB), by ICUB Fellowship for Visiting Professor, and by GNSAGA of INdAM.
\\
The second-named author is supported by the Project PRIN ``Varietà reali e complesse: geometria, topologia e analisi armonica'' and by GNSAGA of INdAM.\\
The third-named author is partially supported by CNCS UEFISCDI, project number PN-II-ID-PCE-2011-3-0118}
\subjclass[2010]{32J18; 32L10; 58H15}

\date{\today}

\begin{document}

\begin{abstract}
We prove that Oeljeklaus-Toma manifolds of simple type are rigid, and that any line bundle on an Oeljeklaus-Toma manifold is flat.
\end{abstract}

\maketitle

\section*{Introduction}
Oeljeklaus-Toma manifolds are complex non-K\"ahler manifolds. They have been introduced in \cite{oeljeklaus-toma} as counterexamples to a conjecture by I. Vaisman concerning locally conformally K\"ahler metrics.
Because of their construction using number fields techniques, many of their properties are encoded in the algebraic structure \cite{oeljeklaus-toma,vuletescu,dubickas}, and their class is well-behaved under such properties \cite{sverbitsky-curves, sverbitsky-surfaces}. They generalize Inoue-Bombieri surfaces in class VII \cite{inoue, tricerri}, and they are in fact solvmanifolds \cite{kasuya-blms}.

For example, K. Oeljeklaus and M. Toma proved in \cite[Proposition 2.5]{oeljeklaus-toma}, among other results, that the line bundles $K_X^{\otimes k}$ for $k\in \Z$ are flat. In this note, we use tools both from the number theoretic construction and from complex analysis and analytic geometry to prove more generally that:

\renewcommand{\referenza}{\ref{thm:flat}}
\begin{thm*}
Any line bundle on an Oeljeklaus-Toma manifold is flat.
\end{thm*}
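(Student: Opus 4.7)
The plan is to describe line bundles on $X$ through cocycles on the universal cover. Write $X=\tilde X/\Gamma$ with $\tilde X = \mathbb{H}^s\times \mathbb{C}^t$ and $\Gamma = \mathcal{O}_K \rtimes U$, where $U$ is the admissible subgroup of totally positive units in $\mathcal{O}_K$. Since $\tilde X$ is Stein and contractible, every holomorphic line bundle on $\tilde X$ is trivial, and hence $\mathrm{Pic}(X)\cong H^1(\Gamma,\mathcal{O}^*(\tilde X))$, with the subgroup of flat bundles corresponding to the image of $H^1(\Gamma,\mathbb{C}^*)$ under the inclusion of constants $\mathbb{C}^*\hookrightarrow\mathcal{O}^*(\tilde X)$. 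The task therefore reduces to showing that every multiplicative $1$-cocycle $j\colon\Gamma\to\mathcal{O}^*(\tilde X)$ is cohomologous to one valued in $\mathbb{C}^*$.

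The next step is to pass from multiplicative to additive cocycles via the exponential sheaf sequence on $\tilde X$: since $H^1(\tilde X,\mathbb{Z})=0$, one may write $j_\gamma=\exp(2\pi i\phi_\gamma)$ for holomorphic functions $\phi_\gamma$ on $\tilde X$, reducing the problem to the analysis of additive cocycles $\phi\colon\Gamma\to\mathcal{O}(\tilde X)$, up to an integer-valued $2$-cocycle (the topological first Chern class). Then one would analyze $H^1(\Gamma,\mathcal{O}(\tilde X))$ using the Lyndon--Hochschild--Serre spectral sequence for $1\to\mathcal{O}_K\to\Gamma\to U\to 1$. For the $\mathcal{O}_K$-piece, the translation action of $\mathcal{O}_K$ on $\tilde X$ (via its embedding in $\mathbb{R}^s\times\mathbb{C}^t$) admits a Fourier-type decomposition of the holomorphic functions along the orbit directions: $H^*(\mathcal{O}_K,\mathcal{O}(\tilde X))$ decomposes as a sum indexed by characters of $\mathcal{O}_K$, on which $U$ acts by permutation combined with its multiplicative action on coordinates.

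The main obstacle is to show that this $U$-action kills every nontrivial Fourier mode at the level of cohomology, leaving only the constant-mode contribution. This is precisely where the simple type hypothesis must be used: it guarantees that $K$ admits no proper subfield stable under $U$, so the $U$-orbit of any nontrivial character of $\mathcal{O}_K$ is infinite and lies in a subspace where no $U$-invariant (or coinvariant) can accumulate; consequently the corresponding terms in the spectral sequence vanish. Combining this with a parallel argument to control the integer-valued part (the topological obstruction in $H^2(X,\mathbb{Z})$), one concludes that every additive cocycle $\phi$ is cohomologous to a constant cocycle, and hence every line bundle on $X$ is flat. The crux of the argument, and what prevents the result from holding for general (non-simple-type) OT manifolds, is exactly the irreducibility of the $U$-action furnished by the simple type hypothesis.
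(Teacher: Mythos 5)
Your overall framework --- identifying $\mathrm{Pic}(X)$ with $H^1(\Gamma;\mathcal{O}^*(\tilde X))$, passing to additive cocycles via the exponential sequence, and running the Lyndon--Hochschild--Serre spectral sequence for $1\to\mathcal{O}_K\to\Gamma\to U\to 1$ --- is exactly the skeleton of the paper's argument. But there is a genuine gap at the crucial analytic step. Averaging over the compact torus $(\mathcal{O}_K\otimes_{\Z}\R)/\mathcal{O}_K$ does kill all nontrivial Fourier modes in $H^1(\mathcal{O}_K;\mathcal{O}(\tilde X))\simeq H^1(X^a;\mathcal{O}_{X^a})$, but this requires neither the $U$-action nor the simple type hypothesis: it is a direct consequence of the averaging operator (Lemma~\ref{lem:inducediso}). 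What survives the averaging is not just constants: by Proposition~\ref{prop:useful} it is the cohomology of forms whose coefficients depend on the non-periodic coordinates $(\Im w^1,\dots,\Im w^s)$, i.e.\ the infinite-dimensional space $H^1_{\text{inv}(\mathcal{O}_K\otimes_{\Z}\R)}(\mathbb{H}^s;\mathcal{O}_{\mathbb{H}^s})$. Your proposal has no mechanism for showing that this piece (or its $U$-invariant part) contributes nothing to $H^1(X;\mathcal{O}_X)$; the claim that ``no $U$-invariant can accumulate'' does not apply here, since $U$ acts on the $\Im w^j$ merely by rescaling and the invariant forms $\frac{1}{\Im w^j}\,d\bar w^j$ are genuinely present. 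The paper closes exactly this gap analytically: it represents classes of $H^1(X;\mathcal{O}_X)$ by harmonic $(0,1)$-forms on the compact manifold $X$, derives from harmonicity a second-order elliptic equation for the coefficients, and invokes the Hopf maximum principle to conclude that the coefficients are constant, hence (being $U$-invariant under rescaling) zero. Some such compactness/maximum-principle input appears unavoidable, and it is absent from your sketch.

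Second, the role of the simple type hypothesis is misplaced. For any Oeljeklaus--Toma manifold the $U$-orbit of a nontrivial character of $\mathcal{O}_K$ is already infinite, since $U$ contains units of infinite order acting by multiplication; so that is not what simple type buys you. In the paper, simple type enters only in the degree-two part of the argument: to control the integral Chern class one applies the Five Lemma to the two exponential sequences, which requires the map $q\colon H^2(X;\C_X)\to H^2(X;\mathcal{O}_X)$ to be injective, and the proof of that injectivity uses $E^{0,2}_2=E^{1,1}_2=0$ in the Lyndon--Hochschild--Serre spectral sequence, i.e.\ $b_2=\binom{s}{2}$, which holds precisely in the simple type case. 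Your ``parallel argument to control the integer-valued part'' is exactly where this work lives, and as written it is only a placeholder.
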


Recently, A. Otiman and M. Toma \cite{otiman-toma} performed a precise and complete study of the Dolbeault cohomology of certain domains contained in Cousin groups, that includes our analysis below. We give here a self-contained proof in our specific context, that actually fits in the same perspective as \cite{vogt-crelle, otiman-toma}.

With techniques very similar to those developed in \cite{otiman-toma}, we get the following vanishing result:
\renewcommand{\referenza}{\ref{thm:vanishing}}
\begin{thm*}
Let $X(K,U)$ be an Oeljeklaus-Toma manifold, and $\rho\colon U \to \C^*$ be a faithful representation. Consider $L_\rho$ its associated flat holomorphic line bundle. Then one has $H^1(X;L_\rho)=0$ unless $\rho=\bar{\sigma}_ i^{-1}$ for some $i \in \{t+1,\dots t+s\}$.
\end{thm*}

As a corollary, we get rigidity, in the sense of the theory of deformations of complex structures of Kodaira-Spencer-Nirenberg-Kuranishi, for Oeljeklaus-Toma manifolds of simple type. Note that for the Inoue-Bombieri surface $S_M$, this is proven by Inoue in \cite[Proposition 2]{inoue}.
Here, by saying that the Oeljeklaus-Toma manifold $X(K,U)$ associated to the algebraic number field $K$ and to the admissible group $U$ is {\em of simple type}, we understand that there exists no proper intermediate field extension $\Q\subset K^\prime \subset K$ with $U\subseteq\mathcal O_{K^\prime}^{*,+}$, that is, there exists no holomorphic foliation of $X(K,U)$ with a leaf isomorphic to $X(K^\prime,U)$ \cite[Remark 1.7]{oeljeklaus-toma}.

\renewcommand{\referenza}{\ref{cor:rigid}}
\begin{cor*}
Oeljeklaus-Toma manifolds of simple type are rigid.
\end{cor*}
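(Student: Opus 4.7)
The plan is to reduce rigidity to the vanishing theorem above. By the Kodaira-Spencer-Nirenberg-Kuranishi theory, it suffices to prove that $H^1(X;\Theta_X)=0$ for the holomorphic tangent sheaf $\Theta_X$.

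The approach is to exploit the realization of $X=X(K,U)$ as the quotient of $\mathbb{H}^s\times\C^t$ by the discrete action of $\pi_1(X)=\mathcal{O}_K\rtimes U$. On the universal cover the tangent bundle is trivialized by the global holomorphic frame $\partial/\partial z_1,\ldots,\partial/\partial z_s,\partial/\partial w_1,\ldots,\partial/\partial w_t$; the normal subgroup $\mathcal{O}_K$ acts by translations and hence trivially on this frame, while each $u\in U$ acts diagonally, multiplying $\partial/\partial z_i$ by $\sigma_i(u)$ and $\partial/\partial w_j$ by $\tau_j(u)$, where $\sigma_i,\tau_j$ denote the real and complex embeddings of $K$. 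The linearized action of $\pi_1(X)$ therefore factors through $U$ and decomposes into one-dimensional subrepresentations, producing a splitting
\[
\Theta_X \;\cong\; \bigoplus_{i=1}^{s} L_{\sigma_i} \;\oplus\; \bigoplus_{j=1}^{t} L_{\tau_j}
\]
as a direct sum of flat line bundles associated to characters of $U$.

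Each of these characters is non-trivial, since the embeddings $K\hookrightarrow\C$ are injective (so that $\sigma_i(u)=1$ or $\tau_j(u)=1$ forces $u=1$) and $U$ has positive rank $s\ge 1$. Applying Theorem \ref{thm:vanishing} to each summand then yields $H^1(X;\Theta_X)=\bigoplus_i H^1(X;L_{\sigma_i})\oplus\bigoplus_j H^1(X;L_{\tau_j})=0$, whence rigidity by Kuranishi's theorem.

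I expect the subtlest point to be a careful verification of the splitting of $\Theta_X$ into the flat line bundles $L_{\sigma_i}$ and $L_{\tau_j}$: one has to check that the $\pi_1(X)$-representation on the tangent space at a base point genuinely factors through $U$ (using that the $\mathcal{O}_K$-translations have trivial differentials) so that the resulting summands are exactly the flat line bundles $L_\rho$ to which Theorem \ref{thm:vanishing} applies.
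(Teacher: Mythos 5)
Your proposal is correct and follows essentially the same route as the paper: split $\Theta_X$ into the flat line bundles $L_{\sigma_j}$ coming from the diagonal $U$-action on the coordinate frame of $\mathbb{H}^s\times\C^t$ (the $\mathcal{O}_K$-translations acting trivially), observe the associated characters are non-trivial, and apply Theorem~\ref{thm:vanishing} to each summand. Your explicit remark on why each character $u\mapsto\sigma_j(u)$ is non-trivial is a small point the paper leaves implicit, but the argument is otherwise identical (up to swapping the roles of the letters $z$ and $w$).
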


{\small
\noindent{\sl Acknowledgments.}
The authors would like to thank Oliver Braunling, Liviu Ornea, Alexandra Otiman, Fabio Podestà, Alberto Saracco, Tatsuo Suwa, Matei Toma, Adriano Tomassini, Valentino Tosatti for interesting comments, suggestions, and discussions.
Many thanks also to the first Reviewer for pointing out an error in a previous version of the paper, and to the second Reviewer for many valuable comments. %, and to the second Reviewer for his thorough and detailed comments that helped us to make this paper much more readable.
Part of this work has been completed during the stay of the first-named author at Universitatea din Bucure\c{s}ti with the support of an ICUB Fellowship: many thanks for the invitation and for the warm hospitality.
}

\section{Oeljeklaus-Toma manifolds}
Oeljeklaus-Toma manifolds \cite{oeljeklaus-toma} provide a beautiful family of examples of compact complex non-K\"ahler manifolds, generalizing Inoue-Bombieri surfaces \cite{inoue}. In this section, we briefly recall the construction and main properties of Oeljeklaus-Toma manifolds from \cite{oeljeklaus-toma}. See \cite{ornea-vuletescu} and \cite[Section 6 of arXiv version]{parton-vuletescu} for more details and algebraic number theory background.

Let $K$ be an algebraic number field, namely, a finite extension of $\Q$. Then $K\simeq\Q[X]\slash(f)$ as $\Q$-algebras, where $f\in\Q[X]$ is a monic irreducible polynomial of degree $n=[K:\Q]$.
By mapping $X\mod(f)$ to a root of $f$, the field $K$ admits $n=s+2t$ embeddings in $\C$, more precisely, $s$ real embeddings $\sigma_1,\ldots,\sigma_s\colon K\to \R$, and $2t$ complex embeddings $\sigma_{s+1},\ldots,\sigma_{s+t},\sigma_{s+t+1}=\overline\sigma_{s+1},\ldots,\sigma_{s+2t}=\overline\sigma_{s+t} \colon K\to\C$.
Note that, for any choice of natural numbers $s$ and $t$, there is an algebraic number field with $s$ real embeddings and $2t$ complex embeddings, \cite[Remark 1.1]{oeljeklaus-toma}.

Denote by $\mathcal O_K$ the ring of algebraic integers of $K$, namely, elements of $K$ satisfying monic polynomial equations with integer coefficients. Note that, as a $\Z$-module, $\mathcal O_K$ is free of rank $n$.
Denote by $\mathcal O_K^*$ the multiplicative group of units of $\mathcal O_K$, namely, invertible elements in $\mathcal O_K$. By the Dirichlet's unit theorem, $\mathcal O_K^*$ is a finitely generated Abelian group of rank $s+t-1$.
Denote by $\mathcal O_K^{*,+}$ the subgroup of finite index of $\mathcal O_K^*$ whose elements are totally positive units, namely, units being positive in any real embedding: $u\in\mathcal{O}_K^*$ such that $\sigma_j(u)>0$ for any $j\in\{1,\ldots,s\}$.

Let $\mathbb H:=\left\{z\in\C : \Im z>0\right\}$ denote the upper half-plane. On $\mathbb H^s\times\C^t$, consider the following actions:
\begin{equation}\label{eq:T}
\begin{split}
T\colon &\mathcal{O}_K \circlearrowleft \mathbb H^s\times\C^t, \\
T_a(w_1,\ldots,w_s,z_{s+1},\ldots,z_{s+t}) &:= (w_1+\sigma_1(a),\ldots,z_{s+t}+\sigma_{s+t}(a)),
\end{split}
\end{equation}
and
\begin{equation}\label{eq:rot}
\begin{split}
R\colon &\mathcal{O}_K^{*,+} \circlearrowleft \mathbb H^s\times\C^t, \\
R_u(w_1,\ldots,w_s,z_{s+1},\ldots,z_{s+t}) &:= (w_1\cdot \sigma_1(u),\ldots,z_{s+t}\cdot \sigma_{s+t}(u)).
\end{split}
\end{equation}

For any subgroup $U\subset\mathcal{O}_K^{*,+}$, one has the fixed-point-free action $\mathcal{O}_K\rtimes U\circlearrowleft\mathbb H^s\times\C^t$.
One can always choose an admissible subgroup \cite[page 162]{oeljeklaus-toma}, namely, a subgroup such that the above action is also properly discontinuous and cocompact. In particular, the rank of admissible subgroups is $s$. Conversely, when either $s=1$ or $t=1$, every subgroup $U$ of $\mathcal{O}_K^{*,+}$ of rank $s$ is admissible.

One defines the {\em Oeljeklaus-Toma manifold} associated to the algebraic number field $K$ and to the admissible subgroup $U$ of $\mathcal O_K^{*,+}$ as
$$ X(K,U) := \left. \mathbb H^s\times\C^t \middle\slash \mathcal O_K\rtimes U \right.$$

In particular, for an algebraic number field $K$ with $s=1$ real embeddings and $2t=2$ complex embeddings, choosing $U=\mathcal O_K^{*,+}$ we obtain that $X(K,U)$ is an Inoue-Bombieri surface of type $S_M$ \cite{inoue}.

The Oeljeklaus-Toma manifold $X(K,U)$ is called {\em of simple type} when there exists no proper intermediate field extension $\Q\subset K^\prime \subset K$ with $U\subseteq\mathcal O_{K^\prime}^{*,+}$, that is, there exists no holomorphic foliation of $X(K,U)$ with a leaf isomorphic to $X(K^\prime,U)$ \cite[Remark 1.7]{oeljeklaus-toma}.

Oeljeklaus-Toma manifolds are non-K\"ahler solvmanifolds \cite[\S6]{kasuya-blms}, with Kodaira dimension $\kappa(X)=-\infty$ \cite[Proposition 2.5]{oeljeklaus-toma}. Their first Betti number is $b_1=s$, and their second Betti number in the case of simple type is $b_2={s\choose 2}$ \cite[Proposition 2.3]{oeljeklaus-toma}. Their group of holomorphic automorphisms is discrete \cite[Corollary 2.7]{oeljeklaus-toma}.
The vector bundles $\Omega^1_X$, $\Theta_X$, $K_X^{\otimes k}$ for $k\in \Z$ are flat and admit no non-trivial global holomorphic sections \cite[Proposition 2.5]{oeljeklaus-toma}.
Other invariants are computed in \cite[Proposition 2.5]{oeljeklaus-toma} and \cite{tomassini-torelli}. Recently, their Dolbeault cohomology is described in \cite{otiman-toma}.
Oeljeklaus-Toma manifolds do not contain either any compact complex curve \cite[Theorem 3.9]{sverbitsky-curves}, or any compact complex surface except Inoue-Bombieri surfaces \cite[Theorem 3.5]{sverbitsky-surfaces}.
When $t=1$, they admit a locally conformally K\"ahler structure \cite[page 169]{oeljeklaus-toma}, with locally conformally K\"ahler rank either $\frac{b_1}{2}$ or $b_1$ \cite[Theorem 5.4]{parton-vuletescu}. This is the Tricerri metric \cite{tricerri} in case $s=1$ and $t=1$.

In the case $t\geq2$, no locally conformally K\"ahler metrics are known to exist, so far. The fact that such an Oeljeklaus-Toma manifold does not carry a locally conformally K\"ahler metric was proven for $s=1$ already in the original paper \cite[Proposition 2.9]{oeljeklaus-toma}, later extended to the case $s<t$ by \cite[Theorem 3.1]{vuletescu}, and eventually widely extended to almost all cases by \cite[Theorem 2]{dubickas}.
Most likely, in the case $t\geq2$, no Oeljeklaus-Toma manifold carries a locally conformally K\"ahler metric.
However, note that Oeljeklaus-Toma manifolds admit no Vaisman metrics \cite[Corollary 6.2]{kasuya-blms}.

\section{Flatness of line bundles on Oeljeklaus-Toma manifolds}\label{sec:flat}

Let $X=X(K,U)$ be the Oeljeklaus-Toma manifold associated to the algebraic number field $K$ and to the admissible subgroup $U\subseteq \mathcal O_K^{*,+}$. Let $s$ denote the number of real embeddings of $K$ and $2t$ the number of complex embeddings of $K$.

For a better understanding of the cohomology of $X$, we start from its very definition, in the form of the following diagram of fibre-bundles:
\begin{equation}\label{dg:main}
\xymatrix{
\tilde X := \mathbb H^s \times \C^t \ar[rd]^{\mathcal O_K} \ar[dd]_{\pi_1(X)=\mathcal O_K \rtimes U} & \\
& X^{\text{ab}} := \left. \mathbb H^s \times \C^t \middle\slash \mathcal O_K \right. \ar[ld]^{U} \\
X := \left. \mathbb H^s \times \C^t \middle\slash \mathcal O_K\rtimes U \right.
}
\end{equation}

\begin{thm}\label{thm:flat}
Any line bundle on an Oeljeklaus-Toma manifold is flat.
\end{thm}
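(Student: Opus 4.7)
The plan is to apply the exponential sheaf sequence $0 \to \Z \to \mathcal{O}_X \to \mathcal{O}_X^* \to 0$ and compare its long exact cohomology sequence with the corresponding sequence for the constant sheaves $0 \to \Z \to \C \to \C^* \to 0$. Flat line bundles are parametrised by $H^1(X; \C^*) \simeq \mathrm{Hom}(\pi_1(X), \C^*)$, so a standard diagram chase reduces the flatness of every $L \in \mathrm{Pic}(X) = H^1(X; \mathcal{O}_X^*)$ to two statements: (i) the natural map $H^1(X; \C) \to H^1(X; \mathcal{O}_X)$ is surjective, so every element of $\mathrm{Pic}^0(X)$ lifts to a flat line bundle; and (ii) the natural map $H^2(X; \C) \to H^2(X; \mathcal{O}_X)$ is injective, so that $c_1(L) \in H^2(X;\Z)$, which vanishes in $H^2(X; \mathcal{O}_X)$ by exactness, is forced to be torsion and hence lifts through $H^1(X; \C^*) \to H^2(X;\Z)$.

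To establish (i) and (ii) I would compute $H^q(X; \mathcal{O}_X)$ for $q = 1, 2$ via the Cartan--Leray spectral sequence
\[
E_2^{p,q} = H^p(U; H^q(X^a; \mathcal{O}_{X^a})) \;\Longrightarrow\; H^{p+q}(X; \mathcal{O}_X)
\]
for the $U$-covering $X^a \to X$. Proposition~\ref{prop:useful}, together with a Poincar\'e lemma on $(\Im \mathbb{H})^s \simeq \R_+^s$ that makes $H^1_{\mathrm{inv}(\mathcal{O}_K \tens{\Z} \R)}(\mathbb{H}^s; \mathcal{O}_{\mathbb{H}^s})$ vanish, gives $H^1(X^a; \mathcal{O}_{X^a}) \simeq \C \langle d\bar z^1, \ldots, d\bar z^t \rangle$, and an analogous degree-two computation yields $H^2(X^a; \mathcal{O}_{X^a}) \simeq \C\langle d\bar z^k \wedge d\bar z^{k'} \rangle_{k < k'}$. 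The element $u \in U$ acts on $d\bar z^k$ by the character $\overline{\sigma_{s+k}(u)}$ and on $d\bar z^k \wedge d\bar z^{k'}$ by $\overline{\sigma_{s+k}(u) \sigma_{s+k'}(u)}$; the simple type hypothesis is exactly what rules out triviality of these characters on $U$, since a relation $\sigma_{s+k}\sigma_{s+k'}|_U \equiv 1$ would place $U$ inside the unit group of a proper intermediate subfield. Combined with the elementary vanishing $H^r(\Z^s; \C_\chi) = 0$ for any non-trivial character $\chi$ (via K\"unneth on $U \simeq \Z^s$), one obtains $E_2^{p,q} = 0$ whenever $q \in \{1, 2\}$ and $p \geq 0$. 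Hence $H^q(X; \mathcal{O}_X) \simeq H^q(U; \C) = \wedge^q \mathrm{Hom}(U, \C) \simeq \C^{\binom{s}{q}}$ for $q = 1, 2$, matching the Betti numbers $b_q(X) = \binom{s}{q}$ recalled from \cite[Proposition~2.3]{oeljeklaus-toma}. Comparing edge maps of the Cartan--Leray spectral sequences for the sheaves $\C$ and $\mathcal{O}_X$ then yields both $H^1(X; \C) \xrightarrow{\sim} H^1(X; \mathcal{O}_X)$ and $H^2(X; \C) \xrightarrow{\sim} H^2(X; \mathcal{O}_X)$, giving (i) and (ii).

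Once (i) and (ii) are in place, the theorem follows by the diagram chase. For any $L \in \mathrm{Pic}(X)$, by (ii) the class $c_1(L)$ is torsion in $H^2(X; \Z)$, so it lifts through $H^1(X; \C^*) \to H^2(X;\Z)$ to a flat line bundle $F$ with matching Chern class. Then $L \otimes F^{-1} \in \mathrm{Pic}^0(X) = H^1(X; \mathcal{O}_X)/H^1(X; \Z)$, which by (i) equals $H^1(X; \C)/H^1(X; \Z) \subset H^1(X; \C^*)$; hence $L \otimes F^{-1}$ is flat, and so is $L$.

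The main obstacle is verifying the non-triviality on $U$ of the characters $\overline{\sigma_{s+k} \sigma_{s+k'}}$ for $k \neq k'$ from the simple type hypothesis; this is the number-theoretic heart of the argument and the only place where simple type is genuinely used. The rest of the proof is routine homological algebra and spectral-sequence bookkeeping.
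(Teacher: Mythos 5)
Your proposal is correct and shares the paper's skeleton (the two exponential-type sequences, the reduction to statements (i) and (ii) about $H^1(X;\C)\to H^1(X;\mathcal O_X)$ and $H^2(X;\C)\to H^2(X;\mathcal O_X)$, and the Lyndon--Hochschild--Serre/Cartan--Leray spectral sequence for $X^a\to X$), but it handles the two key sub-claims by a genuinely different and in fact simpler mechanism. Where the paper proves that the edge map $H^1(X;\mathcal O_X)\to H^1(X^a;\mathcal O_{X^a})^U$ is \emph{zero} --- via Hodge theory, a harmonic co-frame $\varphi^j=\frac1{y^j}dw^j$, and the Hopf maximum principle --- you observe that the relevant $E_2$-terms vanish outright: the summand $H^1_{\text{inv}(\mathcal O_K\tens{\Z}\R)}(\mathbb H^s;\mathcal O_{\mathbb H^s})$ of Proposition~\ref{prop:useful} is the first cohomology of the complex $\mathcal C^\infty((\Im\mathbb H)^s;\C)\tens{}\wedge^{0,\bullet}\langle d\bar w^1,\dots,d\bar w^s\rangle$, which (since $\overline\partial$ acts on functions of $(y^1,\dots,y^s)$ as $\frac{i}{2}\sum_h\partial_{y^h}\,d\bar w^h\wedge\sspace$) is isomorphic to the de Rham complex of the contractible set $(0,\infty)^s$ and hence acyclic in positive degrees; I checked this and it is right, so $H^1(X^a;\mathcal O_{X^a})\simeq\C\langle d\bar z^1,\dots,d\bar z^t\rangle$ and Claim~H1$\alpha$ of the paper holds for the trivial reason that its target is zero. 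Similarly, where the paper avoids computing $H^2(X^a;\mathcal O_{X^a})$ by a diagram chase between the two five-term sequences, you compute it directly (this needs the degree-two analogue of Proposition~\ref{prop:useful}, which follows from the same averaging Lemma and the same bookkeeping) and kill $E_2^{0,2}$ and $E_2^{1,1}$ by non-triviality of the characters $u\mapsto\overline{\sigma_{s+k}(u)}$ and $u\mapsto\overline{\sigma_{s+k}(u)\sigma_{s+k'}(u)}$ on $U$, together with $H^p(\Z^s;\C_\chi)=0$ for $\chi$ non-trivial. What your route buys is the elimination of all analysis (no harmonic representatives, no maximum principle) and an explicit identification $H^q(X;\mathcal O_X)\simeq H^q(U;\C)$ for $q=1,2$; what the paper's route buys is independence from the degree-two invariant-form computation. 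Two points deserve care in a write-up. First, the non-triviality of $\overline{\sigma_{s+k}\sigma_{s+k'}}\big|_U$ for $k\neq k'$ is indeed the number-theoretic heart, but your suggested mechanism (``a relation $\sigma_{s+k}\sigma_{s+k'}|_U\equiv1$ would place $U$ inside a proper intermediate subfield'') is not quite the actual argument; the correct reference is precisely the computation of $b_2=\binom{s}{2}$ for simple type in \cite[pp.~166--167]{oeljeklaus-toma}, which establishes non-triviality of all characters $\sigma_i\sigma_j|_U$, $i<j$ --- the same input the paper cites for the surjectivity of $H^2(U;\C_U)\to H^2(X;\C_X)$. Second, your edge-map comparison for (ii) also requires the vanishing of the corresponding $E_2^{0,2}$ and $E_2^{1,1}$ for the \emph{constant} sheaf $\C_X$, which is again \cite[Proposition~2.3]{oeljeklaus-toma} and again uses simple type; you implicitly acknowledge this by matching Betti numbers, but it should be stated as part of the argument rather than as a consistency check.
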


\begin{proof}
Equivalence classes of line bundles on $X$ are given by $H^1(X;\mathcal O^*_X)$, and the flat ones are given by the image of the map $n\colon H^1(X;\C^*_X)\to H^1(X;\mathcal O^*_X)$ induced by $\C_X\hookrightarrow\mathcal{O}_X$.
The statement is then equivalent to prove that the map
$$
n\colon H^1(X;\C^*_X) \to H^1(X;\mathcal O^*_X)
$$
is an isomorphism.

The map $n$ appears naturally from the following morphism of short exact sequences of sheaves:
 $$ \xymatrix{
  0 \ar[r] & {\Z}_X \ar[r] & \mathcal O_X \ar[r] & \mathcal O^*_X \ar[r] & 0 \\ 
  0 \ar[r] & {\Z}_X \ar[r] \ar@{=}[u] & {\C}_X \ar[r] \ar@{^{(}->}[u] & {\C}^*_X \ar[r] \ar@{^{(}->}[u] & 0 \\ 
 } $$
and the corresponding induced morphism of long exact sequences in cohomology:
\begin{equation}\label{eq:five}
\resizebox{.9\hsize}{!}{
$
\xymatrix{
H^1(X;\Z_X) \ar[r] & H^1(X;\mathcal O_X) \ar[r] & H^1(X;\mathcal O^*_X) \ar[r] & H^2(X;\Z_X) \ar[r] & H^2(X;\mathcal O_X) \\
H^1(X;\Z_X) \ar[r] \ar@{=}[u] & H^1(X;\C_X) \ar[r] \ar[u]_{m} & H^1(X;\C^*_X) \ar[r] \ar[u]_{n} & H^2(X;\Z_X) \ar[r] \ar@{=}[u] & H^2(X;\C_X) \ar[u]_{q}.
}
$
}
\end{equation}
By the Five Lemma, it suffices to prove that, in diagram~\eqref{eq:five}:
\begin{itemize}
\item[\textbf{(H1)}] $m$ is an isomorphism;
\item[\textbf{(H2)}] $q$ is injective.
\end{itemize}

\begin{rmk}
Notice that both of the claims are now proven in \cite[Corollary 4.6, Corollary 4.9]{otiman-toma} as a consequence of a more general description of Dolbeault cohomology of certain domains contained in Cousin groups.
For our aim, we will need a description of $H^1(X^{\text{ab}};\mathcal O_{X^{\text{ab}}})$ as in \cite[Theorem 3.1]{otiman-toma}: for the sake of completeness, we give here below a self-contained argument in our simpler case, in the same line of thought.
Compare also previous partial results by A. Tomassini and S. Torelli \cite{tomassini-torelli} for the case $s=2$ real places and $2t=2$ complex places.
\end{rmk}

\begin{proof}[Proof of Claim (H1).]
To prove that $m$ is an isomorphism, consider the following exact sequence of sheaves:
 $$ \xymatrix{
  0 \ar[r] & \C_X \ar[r] & \mathcal O_X \ar[r] & d\mathcal O_X \ar[r] & 0
 } $$
 and the induced exact sequence in cohomology:
 $$ \xymatrix{
  H^0(X; d\mathcal O_X) \ar[r] & H^1(X; \C_X) \ar[r]^m & H^1(X; \mathcal O_X) .
 } $$
Note that $H^0(X; d\mathcal O_X)=0$, since $H^0(X;\Omega^1_X)=0$ by \cite[Proposition 2.5]{oeljeklaus-toma}. Therefore $m$ is injective.
 Using the fact that $\dim_\C H^1(X;\C_X)=s$ \cite[Proposition 2.3]{oeljeklaus-toma}, it suffices to prove that $\dim_\C H^1(X;\mathcal O_X) = s$.

In order to describe the cohomology of $X$, we use diagram \eqref{dg:main}: we would like to relate the cohomology of $X$ with the $U$-invariant cohomology of $X^{\text{ab}}$. In what follows, we use group cohomology and the Lyndon-Hochschild-Serre spectral sequence to accomplish this task.

In general, whenever one has a map $\pi\colon\tilde X\to X=\tilde X/G$, for a free and properly discontinuous action of a group $G$ on $\tilde X$, and a sheaf $\mathcal{F}$ on $X$, there is an induced map
\begin{equation}\label{eq:groupcoho}
H^p(G; H^0(\tilde X;\pi^*\mathcal{F}))\to H^p(X;\mathcal{F}),
\end{equation}
where the first is the group cohomology of $G$ with coefficients in the $G$-module $H^0(\tilde X;\pi^*\mathcal{F})$, see for instance \cite[Appendix at page 22]{mumford}.
If, moreover, $\pi^*\mathcal{F}$ is acyclic over $\tilde X$, then the map \eqref{eq:groupcoho} is an isomorphism.

Using the previous argument on the $\mathcal O_K\rtimes U$ and the $\mathcal O_K$ maps in diagram \eqref{dg:main}, with $\mathcal{F}=\mathcal{O}_X$ and $\mathcal{F}=\mathcal{O}_{X^{\text{ab}}}$ respectively,
and noting that $\mathcal O_{\tilde X}$ is acyclic over $\tilde X$,
we obtain the isomorphisms
\begin{equation*}
\begin{split}
&H^p(\mathcal O_K\rtimes U;H^0(\tilde X;\mathcal{O}_{\tilde X})) \simeq H^p(X;\mathcal{O}_X)\\
\quad\text{and}\quad &H^p(\mathcal O_K;H^0(\tilde X;\mathcal{O}_{\tilde X})) \simeq H^p(X^{\text{ab}};\mathcal{O}_{X^{\text{ab}}}).
\end{split}
\end{equation*}

Hereafter, for the sake of clearness of notation, we denote the $\mathcal O_K\rtimes U$-module $R:=H^0(\tilde X;\mathcal{O}_{\tilde X})$. The previous isomorphisms are then written as
\begin{equation}\label{eq:Hpiso}
H^p(\mathcal O_K\rtimes U;R)\simeq H^p(X;\mathcal{O}_X)\quad\text{ and }\quad H^p(\mathcal O_K;R)\simeq H^p(X^{\text{ab}};\mathcal{O}_{X^{\text{ab}}}).
\end{equation}

The extension $\mathcal O_K\hookrightarrow\mathcal O_K\rtimes U\twoheadrightarrow U$ gives the associated Lyndon-Hochschild-Serre spectral sequence
\[
E^{p,q}_2=H^p(U;H^q(\mathcal{O}_K;R))\Rightarrow H^{p+q}(\mathcal O_K\rtimes U;R) ,
\]
and the cohomology five-term exact sequence yields
\begin{equation*}
\xymatrix{
&&& 0 \ar[ld] \\
& & H^1(U;H^0(\mathcal O_K;R)) \ar[r] & H^{1}(\mathcal O_K\rtimes U;R) \ar[lld] \\
& H^1(\mathcal O_K;R)^U \ar[r] & H^2(U;H^0(\mathcal O_K;R)) \ar[r] & H^{2}(\mathcal O_K\rtimes U;R).
}
\end{equation*}
From~\eqref{eq:Hpiso}, we get $H^0(\mathcal O_K;R)\simeq H^0(X^{\text{ab}};\mathcal O_{X^{\text{ab}}})=\C$, see \cite[Lemma 2.4]{oeljeklaus-toma}, whence $H^{1}(U;H^0(\mathcal O_K;R)) = \C^{\mathrm{rk}(U)} = \C^s$. Applying again \eqref{eq:Hpiso}, the cohomology five-term exact sequence becomes
\begin{equation}\label{eq:five-term-spectral-O}
\xymatrix{
0 \ar[r] & \C^s \ar[r] & H^{1}(X;\mathcal O_X) \ar[r] & H^1(X^{\text{ab}};\mathcal O_{X^{\text{ab}}})^U \ar[dll] \\
& H^2(U;\C_U) \ar[r] & H^{2}(X;\mathcal O_X).
}
\end{equation}
Therefore, the statement will follow by proving that
\begin{description}
\item[{(H1')}] $H^1(X^{\text{ab}};\mathcal O_{X^{\text{ab}}})^U=0$.
\end{description}
This is a consequence of the more general result in \cite[Theorem 3.1]{otiman-toma}, and we give here below an argument.

More precisely, we first claim that any class in $[\alpha] \in H^1(X^{\text{ab}};\mathcal O_{X^{\text{ab}}})$ has a unique flat representative, $[\alpha] \ni \sum_j c_j d\bar z_j$, where $c_j\in\mathbb C$ are constant, and $(z_1,\ldots,z_t)$ denote the coordinates in $\mathbb C^t$. Since $U\ni u$ acts on $[\alpha]$ by \eqref{eq:rot}, namely, $R_u^*[\alpha]=[\sum_j c_j \cdot \bar\sigma_j(u) d\bar z_j]\neq[\alpha]$ unless $[\alpha]=0$, then it follows that $H^1(X^{\text{ab}};\mathcal O_{X^{\text{ab}}})^U=0$.

We prove now the claim. As suggested in \cite[Proof of Lemma 3.2 at page 5]{otiman-toma}, we look at $X^{\text{ab}}$ as a holomorphic fibre bundle over a complex torus $B=\mathbb C^t/T$ with fibres $F$ being logarithmically convex Reinhardt domains in $(\mathbb C^*)^{s}$ , whence Stein. Recall that the matrix $T$ of periods of $B$ is obtained by putting the matrix $P$ of $\Lambda=\{(\sigma_1(a),\dots, \sigma_{s+t}(a)) \;\vert\; a\in \O_K\}$ into a more convenient form, see \cite[page 4]{AK}.
We consider the Borel-Serre spectral sequence for the Dolbeault cohomology of the holomorphic fibre bundle $F \hookrightarrow X^{\text{ab}} \twoheadrightarrow B$ with Stein fibres \cite{lupacciolu}:
$$ {}^{p,q} E_2^{s,p+q-s} \simeq \bigoplus_{\ell} H^{\ell,s-\ell}_{\overline\partial}(B; H^{p-\ell,q-s+\ell}_{\overline\partial}(F)) \Rightarrow H^{p,q}_{\overline\partial}(X^{\text{ab}}) . $$
Since
$$ 0 = {}^{0,0}E^{-1,1}_2 \stackrel{d_2}{\to} {}^{0,1}E_2^{1,0} \stackrel{d_2}{\to} {}^{0,2}E_2^{3,-1}=0 , $$
we compute
$$ {}^{0,1}E_\infty^{1,0} = {}^{0,1}E_2^{1,0} \simeq H^{0,1}_{\overline\partial}(B; \mathcal{O}_F) ; $$
moreover,
$$ {}^{0,1}E_2^{0,1} \simeq H^{0,0}_{\overline\partial}(B; H^{0,1}_{\overline\partial}(F)) = 0 . $$
Therefore, we can compute
$$ H^{0,1}_{\overline\partial}(X^{\text{ab}}) \simeq \bigoplus_{s+t=1} {}^{0,1}E_\infty^{s,t} = {}^{0,1}E_2^{1,0} \simeq H^{0,1}_{\overline\partial}(B; \mathcal{O}_F) . $$

From now on, for simplicity of calculations, we will denote the elements of $\mathbb H^s$ by $w=(w_1,\dots, w_s)$ and respectively those in $\mathbb C^t$ by $z=(z_1,\dots, z_t).$
We then notice that a class in $H^{0,1}_{\overline\partial}(X^{\text{ab}})$ is represented by
$$ \alpha = \sum_j f_j(w,z) d\bar z_j $$
where $f_j(w,z)$ are smooth functions in $(w,z)\in \mathbb H^s \times \mathbb C^t$, periodic with respect to $T$ (as defined in \eqref{eq:T}), and such that $\overline\partial\alpha=0$; that is, $f_j$ are holomorphic in $w$ and satisfy $\frac{\partial f_j}{\partial \bar z_k}=\frac{\partial f_k}{\partial \bar z_j}$ for any $j\neq k$.
We claim that we can find $c_j(\Im w) \in \mathcal{C}^\infty(\Im \mathbb H^s)$ and $g(w,z) \in \mathcal C^\infty(\mathbb H^s \times \mathbb C^t)$, holomorphic in $w$ and periodic with respect to $T$, such that
$$ \frac{\partial g}{\partial \bar z_j}(w,z)+c_j(\Im w)=f_j(w,z). $$
Since $c_j(\Im w)$ are holomorphic and periodic with respect to $T$, they are constant, and we can take $c_j=f_j(\Im w,0)$. We are then reduced to find $g$ such that
$$ \frac{\partial g}{\partial \bar z_j}=f_j-c_j . $$

We name coordinates $(v, a):=(\Im w, (\Re w, \Re z, \Im z)) \in \mathbb R^s \times \mathbb R^{s+2t}$.
By using Fourier expansion, we can write
$$ f_j(v,a)-c_j = \sum_{L \in \mathbb Z^{s+2t}\setminus 0} f_{j,L}(v) \exp \left( 2\pi\sqrt{-1} \langle AL \vert a \rangle \right) $$
where $A$ is the matrix whose columns are the coefficients of the lattice $\mathcal O_K$ with respect to the standard basis of $\mathbb R^{s+2t}$, so, $A$ has algebraic coefficients.
Here $L\neq0$ because of $f_j(\Im w,0)-c_j=0$.

The condition $\frac{\partial f_j}{\partial\bar z_k}-\frac{\partial f_k}{\partial\bar z_j}=0$ rewrites as: for any $L$, for any $j\neq k$,
\begin{equation}\label{eq:compatibility}
f_{j,L} \cdot \left( (AL)_{s+k} + \sqrt{-1} (AL)_{s+t+k} \right)
=
f_{k,L} \cdot \left( (AL)_{s+j} + \sqrt{-1} (AL)_{s+t+j} \right) .
\end{equation}

Analogously, we expand
$$ g(v,a) = \sum_{L \in \mathbb Z^{s+2t}} g_{L}(v) \exp \left( 2\pi\sqrt{-1} \langle AL \vert a \rangle \right) . $$
Denoting $z_j$ by $x_j+\sqrt{-1}y_j$, we compute
\begin{eqnarray*}
\frac{\partial g}{\partial \bar z_j}
&=& \frac{1}{2}\left(\frac{\partial g}{\partial x_j}+\sqrt{-1}\frac{\partial g}{\partial y_j}\right) = \frac{1}{2}\left(\frac{\partial g}{\partial a_{s+j}}+\sqrt{-1}\frac{\partial g}{\partial a_{s+t+j}}\right) \\
&=& \pi\sqrt{-1}\cdot \sum_{L \in \mathbb Z^{s+2t}} g_{L}(v) \exp \left( 2\pi\sqrt{-1} \langle A \cdot L \vert a \rangle \right) \\
&& \cdot \left((AL)_{s+j}+\sqrt{-1}(AL)_{s+t+j}\right) 
\end{eqnarray*}
We notice that, for any $L$, there is at least one $j$ such that $(AL)_{s+j}+\sqrt{-1}(AL)_{s+t+j}\neq0$, since the columns of $A$ are linearly independent over $\mathbb Q$. Therefore we can set $g_0=0$ (up to an additive constant) and, for $L\in\mathbb Z^{s+2t}\setminus 0$,
$$
g_L := \frac{1}{\pi\sqrt{-1}} \left((AL)_{s+j}+\sqrt{-1}(AL)_{s+t+j}\right)^{-1} f_{j,L} , 
$$
and there is no ambiguity in the choice of such a $j$ because of \eqref{eq:compatibility}.

It remains to prove that the formal solution $g=\sum g_L \exp(2\pi\sqrt{-1}\langle AL \vert a\rangle)$ is actually smooth: that is, that the Fourier coefficients $g_L$ decay faster than any power $\|L\|^{-N}$ for $N > 0$, as $\|L\| \to \infty$.

We make use of the following application of the Subspace Theorem \cite{schmidt}, generalizing the Roth Theorem for $n=1$:

\begin{thm}[{Schmidt \cite{schmidt}, see {\itshape e.g.}\ \cite[Theorem 7.3.2]{bombieri-gubler}}]
Let $\alpha_0, \ldots, \alpha_n$ be algebraic numbers. Then, for every $\varepsilon>0$, the inequality
$$ 0 < | \alpha_0q_0+\cdots+\alpha_n q_n| < (\max\{|q_0|,\ldots,|q_n|\})^{-n-\varepsilon} $$
has only finitely-many solution $(q_0,\ldots,q_n) \in \mathbb Z^{n+1}$.
\end{thm}

We recall that $A_k^h$, for $k\in\{s+1, \ldots, s+2t\}$ and $h\in\{1,\ldots,s+2t\}$, are algebraic numbers. Moreover, once fixed $L\in\mathbb Z^{s+2t}\setminus 0$, there exists $j\in\{1,\ldots,t\}$ such that $(AL)_{s+j}+\sqrt{-1}(AL)_{s+t+j}\neq0$, namely, there exists $k\in\{1,\ldots,2t\}$ such that $(AL)_{s+k}\neq0$, and we can take any such $j$'s for defining $g_L$ thanks to the compatibility condition \eqref{eq:compatibility}.
Then, for $L\in\mathbb Z^{s+2t}\setminus 0$:
\begin{eqnarray*}
|g_L| &=& \frac{1}{\pi} \left|(AL)_{s+j}+\sqrt{-1}(AL)_{s+t+j}\right|^{-1} |f_{j,L}| \\
&\leq& \frac{1}{\pi} \frac{1}{ \left|(AL)_{s+k}\right| } |f_{j,L}|
= \frac{1}{\pi} \frac{1}{ \left|A_{s+k}^1L_1+\cdots+A_{s+k}^{s+2t}L_{s+2t}\right| } |f_{j,L}| \\
&<& \frac{c}{\pi} \max\{|L_1|, \ldots, |L_{s+2t}|\}^{n+\varepsilon} |f_{j,L}| \\
&\leq& \frac{c}{\pi} \|L\|^{n+\varepsilon} |f_{j,L}|,
\end{eqnarray*}
where $c$ is a positive constant depending just on $A$ and on the fixed $\varepsilon>0$, and independent of $L$ and of the chosen $j$ and $k$.
We have then proven that the Fourier coefficients $g_L$ decay as
\begin{equation}\label{eq:fast}
|g_L| < \frac{c}{\pi} \cdot \|L\|^{n+\varepsilon} \cdot |f_{j,L}|.
\end{equation}
Since the form $\alpha$ is smooth, the fast decay is satisfied by the Fourier coefficients $f_{j,L}$ of the $f_j$'s, and together with formula \eqref{eq:fast} this implies the fast decay also for the Fourier coefficients $g_L$ of $g$. Whence, $g$ is a smooth solution.

This finally proves that any class in $H^1(X^{\text{ab}};\mathcal O_{X^{\text{ab}}})$ has a unique flat representative, and by the above argument we get that $H^1(X^{\text{ab}};\mathcal O_{X^{\text{ab}}})^U=0$.
\end{proof}

\begin{proof}[Proof of Claim H2.]
First of all, we argue as we did for diagram~\eqref{eq:five-term-spectral-O}, the only difference being that this time we forget the holomorphic structure. Namely, we use $\mathcal{F}=\C_X$ instead of $\mathcal{F}=\mathcal{O}_X$. Everything works the same way, thanks to $H^j(\tilde X;\C_{\tilde X})=0$ for any $j\geq 1$. Denoting by $S:=H^0(\tilde X;\C_{\tilde X})$, the Lyndon-Hochschild-Serre spectral sequence reads
$$ E^{p,q}_2=H^p(U;H^q(\mathcal O_K;S))\Rightarrow H^{p+q}(\pi_1(X);S)\;,$$
and the associated cohomology five-term exact sequence yields
 $$
\xymatrix{
  0 \ar[r] & \C^s \ar[r] & H^{1}(X;\C_X) \ar[r] & H^1(X^{\text{ab}};\C_{X^{\text{ab}}})^U \ar[dll] \\
& H^2(U;\C_U) \ar[r] & H^{2}(X;\C_X)\;.
 } $$

The map $\C_{\tilde X} \to \mathcal{O}_{\tilde X}$ induces a map $S\to R$, and hence a morphism of exact sequences
 $$ 
\resizebox{.9\hsize}{!}{$\xymatrix{
  0 \ar[r] \ar@{=}[d] & \C^s \ar[r] \ar@{=}[d] & H^{1}(X;\C_X) \ar[rr]^{0\quad} \ar[d] & & H^1(X^{\text{ab}};\C_{X^{\text{ab}}})^U \ar[r] \ar[d] & H^2(U;\C_U) \ar[r] \ar@{=}[d] & H^{2}(X;\C_X) \ar[d]^{q} \ar[r] & 0 \\
  0 \ar[r] & \C^s \ar[r] & H^{1}(X;\mathcal O_X) \ar[rr]_{0\quad} & & H^1(X^{\text{ab}};\mathcal O_{X^{\text{ab}}})^U \ar[r] \ar[d]
    & H^2(U;\C_U) \ar[r] & H^{2}(X;\mathcal O_X) & \\
  & & & & 0 & & &
 } $}$$
Here, we used that: by Claim (H1), we have that the map $H^1(X;\mathcal O_X)\to H^1(X^{\text{ab}};\mathcal O_{X^{\text{ab}}})^U$ is the zero map; by \cite[Proposition 2.3]{oeljeklaus-toma}, we have $b_1=s$, so the map $H^{1}(X;\C_X) \to H^1(X^{\text{ab}};\C_{X^{\text{ab}}})^U$ is the zero map, too; again by Claim (H1'), the map $H^1(X^{\text{ab}};\C_{X^{\text{ab}}})^U \to H^1(X^{\text{ab}};\mathcal O_{X^{\text{ab}}})^U$ is surjective.
Finally, the map $H^2(U;\C_U) \to H^{2}(X;\C_X)$ is surjective: indeed, we claim that the map $H^2(U;\C_U) \to E^{2,0}_\infty$ is surjective and $E^{0,2}_{2}=0=E^{1,1}_{2}$.
This follows by \cite[pages 166--167]{oeljeklaus-toma} in the case when $X$ is of simple type. In fact, thanks to \cite[Theorem 3.1]{istrati-otiman}, we just need that there are no embeddings $\sigma_j$ and $\sigma_k$, for $j,k\in\{1,\ldots,s+2t\}$ with $j\neq k$, such that $\sigma_j(u)\sigma_k(u)=1$ for any $u \in U$; Oeljeklaus-Toma manifolds of simple type satisfy this condition, see \cite[page 16]{istrati-otiman}. We claim that this latter property always holds true, even when $X$ is not of simple type. Indeed, consider $K':=\mathbb Q[U]$. Since $U$ is still admissible for defining an Oeljeklaus-Toma manifold $X':=X'(K',U)$, see \cite[Lemma 1.6]{oeljeklaus-toma}, and $X'$ is of simple type, therefore there are no embeddings $\sigma_j$ and $\sigma_k$ of $K'$, with $j\neq k$, such that $\sigma_j(u)\sigma_k(u)=1$ for any $u \in U$. Moreover, there is no embedding $\sigma_j$ of $K'$ such that $\sigma_j^2(u)=1$ for any $u \in U$. Since the embeddings of $K'$ are just the restrictions of the embeddings of $K$, we get the claim.

At the end, we get that $q$ is injective by diagram chasing.
\end{proof}

This completes the proof of Theorem~\ref{thm:flat} by proving that any line bundle on an Oeljeklaus-Toma manifold is flat.
\end{proof}

We note that the argument in the last lines of the previous proof shows that $\rho_2=0$ in the notation of \cite[Theorem 3.1]{istrati-otiman}, therefore, thanks to Istrati and Otiman's result, we get the following, generalizing \cite[Proposition 2.3]{oeljeklaus-toma}:

\begin{prop}\label{rmk:b2}
Any Oeljeklaus-Toma manifold has $b_2={s \choose 2}$, where $s$ is the number of real embeddings.
\end{prop}

A well-known result by Ornea and Verbitsky \cite{ornea-verbitsky} for $t=1$ and, for any $s,t$, by Battisti and Oeljeklaus \cite{battisti-oeljeklaus}, states that Oeljeklaus-Toma manifolds have no divisors.
In the following proposition, we show that this result follows from Theorem \ref{thm:flat}.

\begin{prop}[{Battisti and Oeljeklaus \cite[Theorem 3.5]{battisti-oeljeklaus}}]
Let $X=X(K, U)$ be an Oeljeklaus-Toma manifold. Then $X$ has no divisors.
\end{prop}

\begin{proof}
Take any line bundle on $X$, which is then flat, and let $\rho$ be the associated representation. But any representation $\rho\colon \pi_1(X)\to \C^*$ induces the identity on $\mathcal{O}_K$ \cite[Proposition 6]{braunling}. Therefore the pull-back of $L_\rho$ to $X^{\text{ab}}$ is trivial, and its sections are constants. Therefore $L_\rho$ has no non-trivial sections on $X$.
\end{proof}

\section{Rigidity of Oeljeklaus-Toma manifolds}

In this section we extensively apply techniques similar to the ones used in Section~\ref{sec:flat}, to prove the following vanishing result. 

\begin{thm}\label{thm:vanishing}
Let $X=X(K,U)$ be an Oeljeklaus-Toma manifold. Take any faithful representation $\rho\colon U\to\C^*$, and let $L_\rho$ be its associated flat holomorphic line bundle on $X$. Then $H^1(X;L_\rho)=0$ unless $\rho=\bar{\sigma}_ i^{-1}$ for some $i\{t+1,\dots t+s\}$.
\end{thm}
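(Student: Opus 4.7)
I would mirror the strategy used in Section~\ref{sec:flat}. Let $R_\rho := H^0(\tilde X; \mathcal{O}_{\tilde X})$ carry the natural translation action of $\pi_1(X)$ twisted by the character $\rho$ (extended by $1$ on $\mathcal O_K$), so that, exactly as in \eqref{eq:Hpiso}, one has $H^p(X; L_\rho) \simeq H^p(\pi_1(X); R_\rho)$ and $H^p(\mathcal O_K; R_\rho) \simeq H^p(X^a; \mathcal O_{X^a})$ (the latter carrying the standard $U$-action twisted by $\rho$). The Lyndon--Hochschild--Serre spectral sequence of $\mathcal O_K \hookrightarrow \pi_1(X) \twoheadrightarrow U$ reads
\[
E_2^{p,q} \;=\; H^p\bigl(U;\, H^q(X^a; \mathcal O_{X^a}) \otimes_\C \C_\rho\bigr) \;\Longrightarrow\; H^{p+q}(X; L_\rho)\;.
\]
Since $U \simeq \Z^s$, the K\"unneth formula gives $H^p(U; \C_\rho) = 0$ for every $p \geq 0$ and every non-trivial $\rho$, because each factor $H^*(\Z; \C_{\rho(e_i)})$ vanishes whenever $\rho(e_i)\neq 1$. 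Hence $E_2^{p,0}=0$ for all $p$, and the five-term exact sequence collapses to
\[
H^1(X; L_\rho) \;\simeq\; H^0\bigl(U;\, H^1(X^a; \mathcal O_{X^a}) \otimes_\C \C_\rho\bigr)\;,
\]
the space of $\rho$-twisted $U$-invariant classes in $H^1(X^a; \mathcal O_{X^a})$.

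To show this vanishes, I would adapt the Hodge-theoretic argument of Claim H1$\alpha$. Fix a Hermitian metric on $X$ orthonormalising the $\pi_1$-invariant co-frame $\{\varphi^j\}_{j=1,\ldots,s+t}$ of Section~\ref{sec:flat}, together with a Hermitian metric on $L_\rho$. Take a harmonic $L_\rho$-valued $(0,1)$-form $\alpha$, lift to $\tilde X$, and write $\alpha = \sum_j \alpha_j \bar\varphi^j$; the $\rho$-equivariance $\gamma^*\alpha = \rho(\gamma)\alpha$ becomes $T_a^*\alpha_j = \alpha_j$ and $R_u^*\alpha_j = \rho(u)\alpha_j$. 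The $\mathcal O_K\tens{\Z}\R$-averaging of Proposition~\ref{prop:useful} lets us take $\alpha_j = \alpha_j(y^1,\ldots,y^s)$ for $j \leq s$, and the harmonicity conditions $\overline\partial\alpha = 0$, $\overline\partial^*\alpha = 0$, $\partial\overline\partial^*\alpha = 0$ yield a second-order linear elliptic equation $L(\alpha_k) = 0$ of the type in \eqref{eq:elliptic}. The Hopf maximum principle applied to $|\alpha_k|^2$ weighted by the $L_\rho$-metric, so as to descend to a function on the compact manifold $X$, forces $\alpha_k$ to be constant, which combined with the $\rho$-equivariance and $\rho \neq 1$ yields $\alpha_k = 0$ for every $k\leq s$.

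The main obstacle I anticipate is controlling the coefficients $\alpha_j$ for $j > s$, which correspond to the $\C\langle d\bar z^1,\ldots,d\bar z^t\rangle$-summand in Proposition~\ref{prop:useful}. After the same reductions, these satisfy the functional equation
\[
\alpha_j(\sigma(u)y) \;=\; \rho(u)\,\overline{\sigma_{s+k}(u)}^{\,-1}\,\alpha_j(y)
\]
along the $U$-action, and one needs to rule out nonzero bounded solutions for every non-trivial $\rho$. This is precisely where the algebraic number-theoretic behaviour of the complex embeddings $\sigma_{s+k}$ restricted to $U$ has to interact with the analytic bound from the compactness of $X$: as soon as $|\rho(u)|\neq |\sigma_{s+k}(u)|$ for some $u\in U$, iterating the equation makes the bounded $\alpha_j$ blow up, so $\alpha_j\equiv 0$. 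The delicate case is when the multiplicative characters $|\rho|$ and $|\sigma_{s+k}|$ agree on all of $U$, and handling it requires exploiting the full algebraic structure of $U$ inside the unit group $\mathcal O_K^{*,+}$ to rule out the corresponding $d\bar z^k$ contribution.
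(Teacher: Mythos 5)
Your spectral-sequence reduction is correct and coincides with the paper's: since $U\simeq\Z^s$ and $\rho$ is non-trivial, $H^p(U;\C_\rho)=0$, so everything reduces to showing that the $\rho$-twisted $U$-invariant part of $H^1(X^a;\mathcal O_{X^a})$ contributes nothing, i.e.\ that a harmonic $L_\rho$-valued $(0,1)$-form vanishes. The problem is that you stop exactly where the proof still has to be finished: you leave the $\C\left\langle d\bar z^1,\dots,d\bar z^t\right\rangle$-summand of Proposition~\ref{prop:useful} unresolved, and the ``delicate case'' you describe ($\rho(u)\,\overline{\sigma_{s+k}(u)}=1$ for all $u\in U$; for the characters $\rho=\sigma_{s+k}\vert_U$ needed in Corollary~\ref{cor:rigid} this is precisely the reciprocal-unit phenomenon $|\sigma_{s+k}(u)|\equiv 1$ studied in \cite{dubickas}) cannot be excluded by soft algebra. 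As written, your argument does not prove the theorem.

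The missing idea is that no number theory is needed, because harmonicity should be taken with respect to the \emph{twisted} operator $\overline\partial_\vartheta=\overline\partial-\vartheta^{0,1}\wedge\sspace$, where $\vartheta$ is a closed $1$-form with $\rho(\gamma)=\exp\int_\gamma\vartheta$. The Laplacian $[\overline\partial_\vartheta,\overline\partial_\vartheta^*]$ has the same principal part as $[\overline\partial,\overline\partial^*]$ but acquires the zeroth-order term $\vartheta^{0,1}\wedge *(\vartheta^{0,1}\wedge *\sspace)+*(\vartheta^{0,1}\wedge *(\vartheta^{0,1}\wedge\sspace))$, whose pairing against a $(0,1)$-form $\gamma$ equals $\|\vartheta^{0,1}\wedge *\gamma\|^2+\|\vartheta^{0,1}\wedge\gamma\|^2=|\vartheta^{0,1}|^2|\gamma|^2$. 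Since $\rho$ is non-trivial, $\vartheta$ may be chosen as a non-zero invariant combination of the $dy^j/y^j$, so $\vartheta^{0,1}$ is nowhere vanishing and this term is strictly positive pointwise. The Hopf maximum principle for an elliptic operator with a zeroth-order term of definite sign then forces the coefficients of the harmonic representative to vanish identically --- all of them, the $d\bar z^k$-components included, with no case distinction on characters. (Equivalently: the factor $\overline{\sigma_{s+k}(u)}^{-1}$ in your functional equation is an artifact of expanding in $d\bar z^k$, which is not $\pi_1(X)$-invariant; expanding instead in the invariant coframe $\{\bar\varphi^j\}$ of the solvmanifold presentation, every coefficient transforms purely by $\rho$, and constancy together with $\rho\neq 1$ already gives zero.)
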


\begin{proof}
We use group cohomology, with the action of $U\ni u$ on $R=H^0(\tilde X;\mathcal{O}_{\tilde X})$ given by
$$ L_u(f):=\rho(u)f\circ  R_u ,$$
where $R_u$ is the rotation given by equation \eqref{eq:rot}, see Section \ref{sec:flat}. Since the pull-back of $L_\rho$ to $\tilde{X}$ is trivial, we get
\begin{equation*}
H^1(\mathcal O_K\rtimes U;R)\simeq H^1(X;L_\rho).
\end{equation*}
From the Lyndon-Hochschild-Serre spectral sequence and the cohomology five-term exact sequence we obtain, as in diagram \eqref{eq:five-term-spectral-O}, the exact sequence
$$ \xymatrix{ H^1(U;H^0(\mathcal{O}_K;R)) \ar[r] & H^1(X;L_\rho) \ar[r] & H^1(\mathcal{O}_K;R)^U }.$$

On the one side,
$ H^1(U;H^0(\mathcal{O}_K;R)) =0$
since $\rho$ is faithful and $U$ is free Abelian (this follows easily from the fact that, for a free cyclic group $U$, one has $H^0(U;\C)=H^1(U; \C)=0$ for any non-trivial representation $\rho$, and then performing induction on the rank of $U$ using  again the Lyndon-Hochschild-Serre spectral sequence).
On the other side, we have $H^1(\mathcal{O}_K;R)^U=H^1(X^{\text{ab}};\mathcal{O}_{X^{\text{ab}}})^U.$ 
But for any $u\in U$ we have 
$$L_u^*(d\overline{z}_i)=\rho(u)\overline{\sigma}_i(u)d\overline{z}_i , $$
hence the conclusion. 
\end{proof}

\begin{rmk}
Another possible argument for Theorem \ref{thm:vanishing} may be found on elliptic Hodge theory, as suggested in \cite{tomassini-torelli}.
We just notice that, if $\vartheta$ is the closed $1$-form determined by $\rho$ as $\rho(\gamma)=\exp\int_\gamma\vartheta$, then the (de Rham) cohomology of $X$ with values in the complex line bundle $L_\rho$ corresponds to the cohomology of the trivial bundle $X\times\C$ with respect to the flat connection $d_{\vartheta}:=d+\vartheta\wedge\sspace$. We split $d_\vartheta=\overline\partial_\vartheta+\partial_\vartheta$ where $\overline\partial_\vartheta:=\overline\partial+\vartheta^{0,1}\wedge\sspace$. Here, $\vartheta^{0,1}$ is the $(0,1)$-component of $\vartheta$. The (Dolbeault) cohomology of $X$ with values in the holomorphic line bundle $L_\rho$ corresponds to the cohomology of the trivial bundle with respect to the flat connection $\overline\partial_\vartheta$.
Elliptic Hodge theory applies with the operator $[\overline\partial_\vartheta,\overline\partial_\vartheta^*]$. Note indeed that the operator is elliptic, since the second-order part of it is equal to the second-order part of $[\overline\partial,\overline\partial^*]$. We claim that the zeroth-order part of $[\overline\partial_\vartheta,\overline\partial_\vartheta^*]$ is positive (with respect to the $L^2$-pairing).
Indeed, note that $\overline\partial_\vartheta^*=-*\overline\partial_{-\vartheta}*$. Therefore the zeroth-order term is given by $\vartheta^{0,1}\wedge *(\vartheta^{0,1}\wedge *\sspace)+*(\vartheta^{0,1}\wedge *(\vartheta^{0,1}\wedge\sspace))$. Note that, on $1$-forms $\gamma$, it holds $\left\langle \vartheta^{0,1}\wedge *(\vartheta^{0,1}\wedge *\gamma) \middle\vert \gamma \right\rangle = \left\|\vartheta^{0,1}\wedge *\gamma \right\|^2\geq0$, and, similarly, $\left\langle *(\vartheta^{0,1}\wedge *(\vartheta^{0,1}\wedge\gamma)) \middle\vert \gamma \right\rangle=\left\|\vartheta^{0,1}\wedge\gamma \right\|^2\geq 0$.
\end{rmk}

As a corollary, we get rigidity in the sense of the theory of deformations of complex structures of Kodaira-Spencer-Nirenberg-Kuranishi. See \cite[Proposition 2]{inoue} for rigidity in the case $s=t=1$ of Inoue-Bombieri surfaces.

\begin{cor}\label{cor:rigid}
 Oeljeklaus-Toma manifolds of simple type are rigid.
\end{cor}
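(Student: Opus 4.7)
The plan is to invoke Kodaira--Spencer--Nirenberg--Kuranishi deformation theory: for a compact complex manifold $X$, rigidity follows from the vanishing $H^1(X;\Theta_X)=0$. The strategy is to reduce this vanishing to Theorem~\ref{thm:vanishing} by exhibiting $\Theta_X$ as a direct sum of flat line bundles $L_{\rho_j}$, each associated to a non-trivial character $\rho_j\colon U \to \C^*$.

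The first step is to pin down $\Theta_X$ explicitly from the construction. On the universal cover $\tilde X = \mathbb{H}^s \times \C^t$, the global coordinate vector fields $\partial/\partial w^j$ (for $j\in\{1,\ldots,s\}$) and $\partial/\partial z^k$ (for $k\in\{1,\ldots,t\}$) are invariant under the translation action of $\mathcal{O}_K$. Under the rotation action \eqref{eq:rot} of $u\in U$, each is an eigenvector: the eigencharacter on $\partial/\partial w^j$ is $\sigma_j(u)^{-1}$ (because $R_u$ multiplies $w^j$ by $\sigma_j(u)$, so its differential acts by $\sigma_j(u)$ on vector fields, and the pull-back trivialization acquires the inverse), and analogously $\sigma_{s+k}(u)^{-1}$ on $\partial/\partial z^k$. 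This gives a holomorphic splitting
\[
\Theta_X \;\simeq\; \bigoplus_{j=1}^{s+t} L_{\rho_j}\;,\qquad \rho_j(u):=\sigma_j(u)^{-1}\;,
\]
where each $\rho_j$ is viewed as a character of $\pi_1(X)=\mathcal{O}_K\rtimes U$ via the projection onto $U$.

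The second step is to check that each $\rho_j$ is non-trivial: since $\sigma_j\colon K\hookrightarrow\C$ is an injective ring homomorphism, $\sigma_j(u)=1$ forces $u=1$, and an admissible subgroup $U$ has positive rank $s\geq 1$, hence contains non-trivial elements. With this, Theorem~\ref{thm:vanishing} applies to each $L_{\rho_j}$ and yields $H^1(X;L_{\rho_j})=0$. Summing over $j\in\{1,\ldots,s+t\}$ gives $H^1(X;\Theta_X)=0$, which by Kodaira--Spencer--Kuranishi implies rigidity.

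The only non-formal ingredient is the splitting $\Theta_X\simeq\bigoplus_j L_{\rho_j}$, and the main care needed is merely in tracking that the characters come from the embeddings $\sigma_j$ (inverted, because we push forward vector fields rather than coordinates); the rest of the argument is a direct application of Theorem~\ref{thm:vanishing}. Note, moreover, that the vanishing theorem does not require $X$ to be of simple type, so the corollary will hold in the same generality as Theorem~\ref{thm:vanishing}.
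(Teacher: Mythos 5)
Your proposal is correct and follows essentially the same route as the paper: decompose $\Theta_X$ as a direct sum of flat line bundles associated to the characters coming from the embeddings $\sigma_j$ (whether one records the character as $\sigma_j(u)$ or $\sigma_j(u)^{-1}$ is immaterial, since non-triviality is preserved), and apply Theorem~\ref{thm:vanishing} summand by summand. Your explicit verification that each character is non-trivial, and the remark that simple type is not needed, are correct refinements of details the paper leaves implicit.
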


\begin{proof}
Note that $\Theta_{\mathbb{H}^s\times\C^t}=\left\langle\frac{\partial}{\partial w^1},\ldots,\frac{\partial}{\partial w^s}, \frac{\partial}{\partial z^1},\ldots,\frac{\partial}{\partial z^t}\right\rangle$, and $\mathcal{O}_K \rtimes U\ni (a,u)$ acts on $\frac{\partial}{\partial w^h}$, respectively $\frac{\partial}{\partial z^k}$, as multiplication by $\sigma_h(u)$, respectively $\sigma_{s+k}(u)$. Whence the holomorphic tangent bundle of an Oeljeklaus-Toma manifold splits as
$$ \Theta_X = \bigoplus_{j=1}^{s+t} L_{\sigma_j^{{-1}}} , $$
where $L_{\sigma_j}$ are the line bundle associated to the embeddings $\sigma_j$.
By Theorem \ref{thm:vanishing}, we get $H^1(X;\Theta_X)=0$,
unless $\sigma_i^{{-1}}(u)=\overline{\sigma}_j^{-1}(u)$ for some $i \in \{1,\dots s+t\}$, $j \in \{s+1,\dots, s+t\}$ and any $u\in U.$ But this would imply that for all $u\in U$ we have $\sigma_i(u)=\sigma_{j+t}(u)$ hence all $u\in U$ live in a proper subfield of $K$, absurd since we assumed $X$ to be of simple type.
This  proves the claim.
\end{proof}

\begin{rmk}
For the case $t=1$, a stronger result was obtained by O. Braunling. He proves in \cite[Proposition 1]{braunling} that, if two Oeljeklaus-Toma manifolds $X'=X(K';\mathcal{O}_{K'}^{*,+})$ and $X''=X(K'';\mathcal{O}_{K''}^{*,+})$, both having $t=1$, are homotopy equivalent, then they are isomorphic.
\end{rmk}

\bibliographystyle{alpha}
\bibliography{bibliografia} 

\begin{thebibliography}{{Ver}13}

\bibitem[AK01]{AK}
Yukitaka Abe and Klaus Kopfermann.
\newblock {\em Toroidal groups}, volume 1759 of {\em Lecture Notes in
  Mathematics}.
\newblock Springer-Verlag, Berlin, 2001.
\newblock Line bundles, cohomology and quasi-abelian varieties.

\bibitem[BG06]{bombieri-gubler}
Enrico Bombieri and Walter Gubler.
\newblock {\em Heights in {D}iophantine geometry}, volume~4 of {\em New
  Mathematical Monographs}.
\newblock Cambridge University Press, Cambridge, 2006.

\bibitem[BO15]{battisti-oeljeklaus}
Laurent Battisti and Karl Oeljeklaus.
\newblock Holomorphic line bundles over domains in {C}ousin groups and the
  algebraic dimension of {O}eljeklaus-{T}oma manifolds.
\newblock {\em Proc. Edinb. Math. Soc. (2)}, 58(2):273--285, 2015.

\bibitem[Bra17]{braunling}
O.~Braunling.
\newblock Oeljeklaus-{T}oma manifolds and arithmetic invariants.
\newblock {\em Math. Z.}, 286(1-2):291--323, 2017.

\bibitem[Dub14]{dubickas}
Art\={u}ras Dubickas.
\newblock Nonreciprocal units in a number field with an application to
  {O}eljeklaus-{T}oma manifolds.
\newblock {\em New York J. Math.}, 20:257--274, 2014.

\bibitem[Ino74]{inoue}
Masahisa Inoue.
\newblock On surfaces of {C}lass {${\rm VII}_{0}$}.
\newblock {\em Invent. Math.}, 24:269--310, 1974.

\bibitem[IO17]{istrati-otiman}
Nicolina {Istrati} and Alexandra {Otiman}.
\newblock {De Rham and twisted cohomology of Oeljeklaus-Toma manifolds}.
\newblock {\em arXiv e-prints}, page arXiv:1711.07847, November 2017.
\newblock to appear in {\em Ann. Inst. Fourier}.

\bibitem[Kas13]{kasuya-blms}
Hisashi Kasuya.
\newblock Vaisman metrics on solvmanifolds and {O}eljeklaus-{T}oma manifolds.
\newblock {\em Bull. Lond. Math. Soc.}, 45(1):15--26, 2013.

\bibitem[Lup82]{lupacciolu}
Guido Lupacciolu.
\newblock On the spectral sequence for the {$\overline\partial $}-cohomology of
  a holomorphic bundle with {S}tein fibres.
\newblock {\em Atti Accad. Naz. Lincei Rend. Cl. Sci. Fis. Mat. Natur. (8)},
  72(6):301--307 (1983), 1982.

\bibitem[Mum08]{mumford}
David Mumford.
\newblock {\em Abelian varieties}, volume~5 of {\em Tata Institute of
  Fundamental Research Studies in Mathematics}.
\newblock Published for the Tata Institute of Fundamental Research, Bombay; by
  Hindustan Book Agency, New Delhi, 2008.
\newblock With appendices by C. P. Ramanujam and Yuri Manin, Corrected reprint
  of the second (1974) edition.

\bibitem[OT05]{oeljeklaus-toma}
Karl Oeljeklaus and Matei Toma.
\newblock Non-{K}\"{a}hler compact complex manifolds associated to number
  fields.
\newblock {\em Ann. Inst. Fourier (Grenoble)}, 55(1):161--171, 2005.

\bibitem[OT18]{otiman-toma}
Alexandra {Otiman} and Matei {Toma}.
\newblock {Hodge decomposition for Cousin groups and for Oeljeklaus-Toma
  manifolds}.
\newblock {\em arXiv e-prints}, page arXiv:1811.02541, November 2018.

\bibitem[OV11]{ornea-verbitsky}
Liviu Ornea and Misha Verbitsky.
\newblock Oeljeklaus-{T}oma manifolds admitting no complex subvarieties.
\newblock {\em Math. Res. Lett.}, 18(4):747--754, 2011.

\bibitem[OV13]{ornea-vuletescu}
Liviu Ornea and Victor Vuletescu.
\newblock Oeljeklaus-{T}oma manifolds and locally conformally {K}\"{a}hler
  metrics. {A} state of the art.
\newblock {\em Stud. Univ. Babe\c{s}-Bolyai Math.}, 58(4):459--468, 2013.

\bibitem[PV12]{parton-vuletescu}
Maurizio Parton and Victor Vuletescu.
\newblock Examples of non-trivial rank in locally conformal {K}\"{a}hler
  geometry.
\newblock {\em Math. Z.}, 270(1-2):179--187, 2012.
\newblock arXiv version: arXiv:1001.4891.

\bibitem[Sch70]{schmidt}
Wolfgang~M. Schmidt.
\newblock Simultaneous approximation to algebraic numbers by rationals.
\newblock {\em Acta Math.}, 125:189--201, 1970.

\bibitem[Tri82]{tricerri}
Franco Tricerri.
\newblock Some examples of locally conformal {K}\"{a}hler manifolds.
\newblock {\em Rend. Sem. Mat. Univ. Politec. Torino}, 40(1):81--92, 1982.

\bibitem[TT15]{tomassini-torelli}
Adriano {Tomassini} and Sara {Torelli}.
\newblock {On the cohomology of Oeljeklaus-Toma manifolds}.
\newblock {\em preprint}, 2015.

\bibitem[{Ver}13]{sverbitsky-surfaces}
Sima {Verbitsky}.
\newblock {Surfaces on Oeljeklaus-Toma Manifolds}.
\newblock {\em arXiv e-prints}, page arXiv:1306.2456, June 2013.

\bibitem[Ver14]{sverbitsky-curves}
S.~M. Verbitskaya.
\newblock Curves on {O}eljeklaus-{T}oma manifolds.
\newblock {\em Funktsional. Anal. i Prilozhen.}, 48(3):84--88, 2014.
\newblock translation in {\em Funct. Anal. Appl.} \textbf{48} (2014), no. 3,
  223--226.

\bibitem[Vog82]{vogt-crelle}
Christian Vogt.
\newblock Line bundles on toroidal groups.
\newblock {\em J. Reine Angew. Math.}, 335:197--215, 1982.

\bibitem[Vul14]{vuletescu}
Victor Vuletescu.
\newblock L{CK} metrics on {O}eljeklaus-{T}oma manifolds versus {K}ronecker's
  theorem.
\newblock {\em Bull. Math. Soc. Sci. Math. Roumanie (N.S.)},
  57(105)(2):225--231, 2014.

\end{thebibliography}

\end{document}